\documentclass[11pt]{article}

\usepackage[margin=1in]{geometry}
\usepackage{amsmath,amssymb,amsthm,mathtools}
\usepackage{graphicx}
\usepackage{hyperref}
\hypersetup{hidelinks}
\usepackage{bm}
\usepackage{booktabs}
\usepackage[font=small]{caption}
\usepackage{tcolorbox}

\newtheorem{theorem}{Theorem}

\newtheorem{corollary}{Corollary}
\theoremstyle{definition}

\theoremstyle{remark}

\newcommand{\R}{\mathbb{R}}
\newcommand{\V}{\mathbb{V}}
\newcommand{\beq}{\begin{equation}}
\newcommand{\eeq}{\end{equation}}
\newcommand{\ba}{\begin{array}}
\newcommand{\ea}{\end{array}}
\newcommand{\la}{\langle}
\newcommand{\ra}{\rangle}
\DeclareMathOperator{\argmin}{argmin}
\DeclareMathOperator{\dom}{dom}

\newcommand{\mat}[1]{\bm{#1}}

\providecommand{\0}{\boldsymbol{0}}

\providecommand{\hh}{\boldsymbol{h}}

\renewcommand{\ss}{\boldsymbol{s}}

\providecommand{\vv}{\boldsymbol{v}}

\providecommand{\xx}{\boldsymbol{x}}
\providecommand{\yy}{\boldsymbol{y}}

\title{
\vspace*{-1.98cm}
	\textbf{
		Primal Acceleration of Newton's Method}}
\author{Nikita Doikov~\thanks{School of Operations Research and Information Engineering (ORIE), Cornell University, Ithaca, NY, USA. \texttt{nikita.doikov@cornell.edu}}}
\date{August 21, 2026}

\begin{document}
	
	\maketitle	
	\begin{abstract}
		We develop a new direct accelerated Newton method
		for minimizing convex functions with Lipschitz continuous Hessian.
		The algorithm uses only primal variables and performs just one linear solve per iteration. With a simple predetermined choice of parameters, it achieves the global convergence rate of $O(1/k^3)$ in terms of the functional residual. To the best of our knowledge,
		this is the first second-order method for this problem class attaining this rate 
		while relying solely on one linear system solve per iteration (without solving auxiliary nonlinear regularized subproblems,
		such as cubic regularization,
		performing nonlinear parameter searches, or using dual extragradient corrections). Our method can be implemented in a Hessian-free way, using an inexact linear system solver,
		while preserving the fast global rate. We further extend our construction to arbitrary geometry through Bregman divergence, and to composite optimization problems.
		
	\end{abstract}

	\section{Introduction}

	Let us consider the standard unconstrained minimization problem:
	\beq \label{MainProblem}
	\ba{rcl}
	\min\limits_{\xx \in \R^n} f(\xx),
	\ea
	\eeq
	where the objective $f: \R^n \to \R$ is convex and several times differentiable.
	
	The Fast Gradient Method (FGM) of Nesterov~\cite{nesterov1983method} 
	is the optimal first-order algorithm for solving problem~\eqref{MainProblem}, which can be implemented in the following momentum form,
	starting from an arbitrary initialization $\xx_0 = \yy_0 \in \R^n$, we iterate:
	\beq \label{FGM}
	\left. \textbf{FGM:} \qquad
	\ba{rcl} 
	\xx_{k + 1} & = & \yy_k -  \alpha_k \nabla f(\yy_k) \\[10pt]
	\yy_{k + 1} & = & \xx_{k + 1} + \beta_k (\xx_{k + 1} - \xx_k)
	\ea
	\right\},  \quad k \geq 0.
	\eeq
	Parameter $\alpha_k > 0$ plays the role of the step-size, and $\beta_k \in [0, 1]$ is the momentum coefficient,
	which specifies how fast we forget the history. The choice $\beta_k = 0$ (no momentum) corresponds to the classic gradient descent, while setting, for example $\beta_k := \frac{k}{k + 3}$, yields the accelerated rate of convergence of $O(1 / k^2)$ (see Table~1 below for the detailed comparison of rates).
	
	In this paper, we study a direct generalization of the classic FGM iterates~\eqref{FGM} to second-order optimization methods,
	incorporating the Hessian (or its approximation) $\mat{H}_k \approx \nabla^2 f(\yy_k)$ in the following most natural way,
	which we call the \textit{primal acceleration} of Newton method:
	\beq \label{FNM}
	\left. \textbf{This paper:}\qquad 
	\ba{rcl} 
	\xx_{k + 1} & = & \yy_k - \bigl(  \mat{H}_k + \frac{1}{\alpha_k} \mat{I} \bigr)^{-1} \nabla f(\yy_k) \\[10pt]
	\yy_{k + 1} & = & \xx_{k + 1} + \beta_k (\xx_{k + 1} - \xx_k)
	\ea \right\},  \quad k \geq 0.
	\eeq
	Note that method~\eqref{FNM} operates solely in the primal space of variables,
	without aggregating the dual objects (gradients), in contrast to the estimate sequence framework~\cite{nesterov2018lectures}, and without performing any extragradient corrections.
	Moreover, setting $\mat{H}_k := \0$ immediately recovers the classic FGM.
	Most importantly, algorithm~\eqref{FNM} is very simple to implement as it requires only \textit{one linear system solve per iteration}. At the same time, using the exact Hessian $\mat{H}_k := \nabla^2 f(\yy_k)$
	and a predefined schedule of second-order step-sizes $\alpha_k$ equips the algorithm
	with accelerated $O(1 / k^3)$ rate on convex functions with Lipschitz Hessian. To the best of our knowledge, this is the first
	second-order method with this rate on our problem class with such simple  per-iteration work, as 
	all previously known acceleration techniques required solving auxiliary regularized subproblems (such as cubic regularization) or performing nonlinear parameter searches. We summarize convergence rates for our new approach and for previously known methods in the following table.
	
	\begin{table}[htbp]
		\centering
		\renewcommand{\arraystretch}{1.3} 
		\begin{tabular}{l l l c}
			\multicolumn{4}{c}{\textbf{First-order methods: $\mat{H}_k = \0$ in~\eqref{FNM}}} \\
				\midrule
			{Method} & \multicolumn{2}{c}{Parameters} & {Convergence Rate, $F_k$} \\
			\midrule
			Gradient Descent     & $\alpha_k = \frac{1}{L_1}$ & $\beta_k = 0$ & $O\bigl( \frac{L_1 R^2}{k}  \bigr)$ \\
			Fast Gradient Method~\cite{nesterov1983method} & $\alpha_k = \frac{1}{L_1} \cdot \frac{k + 1}{k + 2}$ & $\beta_k = \frac{k}{k + 3}$ & $O\bigl( \frac{L_1 R^2}{k^2}  \bigr)$ \\
			\midrule
			\multicolumn{4}{c}{\textbf{Second-order methods: $\mat{H}_k = \nabla^2 f(\yy_k)$  in~\eqref{FNM}}} \\
			\midrule
			{Method} & \multicolumn{2}{c}{Parameters} & {Convergence Rate, $F_k$} \\
			\midrule
			Classical Newton's Method            & $\alpha_k = +\infty$ & $\beta_k = 0$  & No global convergence \\
			Cubic Regularization~\cite{nesterov2006cubic}            & $\alpha_k = \frac{2}{L_2 \| \xx_{k + 1} - \xx_k \|_2}$ & $\beta_k = 0$  & $O\bigl( \frac{L_2 D^3}{k^2} \bigr)$ \\
			Gradient Regularization~\cite{mishchenko2023regularized}        & $\alpha_k = \frac{2}{ \sqrt{L_2 \| \nabla f(\xx_k) \|_2} }$ & $\beta_k = 0$  & $O\bigl( \frac{L_2 D^3}{k^2} + e^{-k} \bigr)$ \\[10pt]
			\textbf{This paper:}             & $\alpha_k = \frac{1}{ 3 L_2 R} \cdot \frac{(k + 1)(k + 2)}{k + 3}$ & $\beta_k = \frac{k}{k + 4}$  & $O\bigl( \frac{L_2 R^3}{k^3} \bigr)$ \\[8pt]
			Optimal~\cite{monteiro2013accelerated,kovalev2022first,carmon2022optimal} & 
			\multicolumn{2}{c}{Nonlinear subproblem for each $k$} &
			$O\bigl( \frac{L_2 R^3}{k^{7/2}} \bigr)$  \\
			\bottomrule
		\end{tabular}
		\caption{Summary of optimization methods and their convergence rates in terms of the functional residual $F_k := f(\xx_k) - f^{\star}$, and $O(\cdot)$ hides an absolute numerical constant. We denote by $L_1$ the Lipschitz constant of the gradient, and by $L_2$ the Lipschitz constant of the Hessian, both measured w.r.t. the standard Euclidean norm. We denote by $D$ the diameter of the initial sublevel set, and $R \geq \| \xx_0 - \xx^{\star} \|_2$ is an arbitrary bound for the initial distance to a solution $\xx^\star$, which we assume to exist. }
		\label{tab:algorithms}
	\end{table}

	\paragraph{Related Literature.}

	Newton's method is a powerful and well-developed approach for solving nonlinear optimization
	problems across various domains. See~\cite{polyak2007newton, nocedal2006numerical, nesterov2018lectures}
	for the detailed bibliography and for the classic results.
	One of the fundamental research questions related to the Newton-type methods
	is the question of its \textit{global convergence}, starting from an arbitrary initialization.
	
	A significant progress in this direction was achieved after the development
	of the \textit{cubic regularization} technique with its global convergence guarantees~\cite{nesterov2006cubic}.
	Adaptive and inexact second-order methods
	were developed in~\cite{cartis2011adaptive1,cartis2011adaptive2}.
	Universal algorithms based on the idea of cubic regularization suitable to minimize functions with H\"older continuous Hessian were considered in~\cite{grapiglia2017regularized,doikov2021minimizing}.
	
	Some of the further successful globalization approaches to Newton's method in the convex case include
	\textit{contracting technique}~\cite{doikov2021new},
	\textit{gradient regularization}~\cite{mishchenko2023regularized,doikov2023gradient,doikov2024super}.
	Notably, the Newton method with gradient regularization possesses the 
	same global $O(1 / k^2)$-rate as the cubically regularized Newton
	on convex functions with Lipschitz continuous Hessian~(see Table~\ref{tab:algorithms}).
	At the same time, this method requires solving only one linear system per iteration, using as the regularization parameter $\alpha_k$ a power of the gradient norm at the current iteration.
	In contrast, the cubically regularized Newton method requires solving a univariate nonlinear subproblem of the same complexity as that for trust-region methods~\cite{conn2000trust}.

	A parallel line of research is devoted to \textit{acceleration of the Newton method},
	with the core initial result of the accelerated cubic Newton~\cite{nesterov2008accelerating}.
	The latter algorithm requires performing one cubic Newton step and one correcting gradient step,
	 which is an auxiliary minimization of the estimating function.
	Thus, the complexity of each step is of the same cost as for the cubic Newton,
	while the method achieves an improved $O(1 / k^3)$-rate on our problem class
	of convex functions with Lipschitz Hessian. 
	
	This rate was improved in~\cite{monteiro2013accelerated},
	by replacing the cubic prox function by quadratic one and using inexact proximal-point steps
	with the total rate of $O(1 / k^{7/2})$.
	However, each iteration would require solving an auxiliary line-search subproblem with the total logarithmic cost
	of second-order oracle calls and non-trivial approximate proximal-point steps.
	The latter rate was shown to be the optimal one, matching the corresponding lower bounds~\cite{arjevani2019oracle,agarwal2018lower}. 
	Further, using the language of high-order proximal-point steps, it was shown in~\cite{nesterov2023inexact}
	that the corresponding nonlinear subproblem required to be solved in each iteration of the optimal methods can be made convex, albeit still nonlinear. The optimal second-order scheme from~\cite{monteiro2013accelerated}
	was extended to an arbitrary non-Euclidean geometry in~\cite{contreras2024non}.
	
    Notice that the gap between the optimal complexity $O(1 / \varepsilon^{2/7})$ and the corresponding  $O(1 / \varepsilon^{1/3})$ of the accelerated cubic Newton from~\cite{nesterov2008accelerating} 
	is of the factor $1 / \varepsilon^{(1/3 - 2/7)} = 1 / \varepsilon^{1 / 21}$, which seems rather negligible for practical applications
	and, with a careless implementation, can be offset by the cost of the subproblem.
	
	An optimal in terms of the oracle complexity second-order method, omitting an auxiliary logarithm, was developed in~\cite{kovalev2022first}.
	The accelerated scheme in~\cite{kovalev2022first} is based on inexact proximal-point view
	with a \textit{predefined} optimal rate of convergence, which omits solving an auxiliary nonlinear line-search subproblem as in~\cite{monteiro2013accelerated}.
	At the same time, each iteration might require performing several
	cubically regularized second-order steps, with amortized optimal complexity.
	Another approach for obtaining an optimal second-order oracle complexity was proposed in~\cite{carmon2022optimal} and based on the idea of \textit{adaptive} Monteiro-Svaiter acceleration from~\cite{monteiro2013accelerated}. Note that in both optimal methods from~\cite{kovalev2022first} and~\cite{carmon2022optimal}, each step of the accelerated scheme requires solving a certain amortized number of linear systems, as well as performing an auxiliary \textit{extragradient step}.
	
	The idea of the extragradient corrections was further successfully employed in the Extra-Newton method from~\cite{antonakopoulos2022extra}, obtaining an accelerated $O(1/k^3)$ rate by performing
	minimization of the quadratic model over the bounded convex set followed by an extragradient correction.
	In the simplest case of the feasible set provided by the Euclidean ball, the subproblem from~\cite{antonakopoulos2022extra} is of trust-region type and its cost is the same as that one of the cubically regularized Newton~\cite{nesterov2008accelerating}. At the same time, the method~\cite{antonakopoulos2022extra}
	admits the use of stochastic gradients and Hessians.
	The complexity of the accelerated stochastic second-order methods was improved in~\cite{agafonov2023advancing}, and for the inexact second-order oracles in~\cite{chen2026optimal} based on the inexact proximal-point framework~\cite{monteiro2013accelerated}. 
	
	Therefore, all of the aforementioned acceleration schemes perform an extragradient correction (or an auxiliary estimating function minimization), in order to shift the new proximal center against possible errors caused by 
	approximately solving a certain nonlinear subproblem.
	
	Finally, we note that an independent line of work considers more refined problem classes, such as 
	convex functions with Lipschitz or H\"older-continuous \textit{third-derivative}~\cite{grapiglia2020tensor,doikov2024super,hanzely2026newton}, \textit{quasi-self-concordant} and \textit{generalized-self-concordant} 
	function~\cite{bach2010self,sun2019generalized,carmon2020acceleration,doikov2025minimizing,semenov2026gradient}, for which it is possible to obtain faster rate beyond 
	those presented in Table~\ref{tab:algorithms}.
	Such acceleration comes from more refined smoothness assumptions on the target objective.
	In this work, we focus on the broad class of convex functions with Lipschitz Hessian,
	for which our new algorithmic framework seems to be substantially new.
	
	\paragraph{Contributions.}

	We propose a very simple second-order acceleration algorithm~\eqref{FNM},
	which, in contrast to previously known schemes,
	operates solely in the \textit{primal space of variables} (without extragradient corrections). 
	
	Our new method can be interpreted as a further development of contracting-proximal acceleration framework from~\cite{doikov2020contracting}. In contrast to~\cite{doikov2020contracting}, where each iteration requires approximately minimizing an auxiliary \textit{contracting subproblem} in the logarithmic number of steps of the basic method, we show that it is enough to perform only \textit{a single Newton step} to ensure that the new point is sufficiently close to the minimizer of the contracted subproblem. In this regard, our viewpoint resembles the classic path-following approach from the theory of the interior-point methods~\cite{renegar2001mathematical,nesterov1994interior,nesterov2018lectures,dvurechensky2018global,dvurechensky2025improved}.

	Another interpretation of our approach is a direct generalization of the fast gradient method in the classic momentum-based form~\eqref{FGM}. 
	
	Our new method uses only one linear system solve per iteration, 
	and does not require solving any nonlinear subproblem (as in cubic regularization), performing auxiliary searches, or performing extragradient corrections. At the same time, it achieves the accelerated $O(1/k^3)$ rate as the original acceleration of the cubic Newton method from~\cite{nesterov2008accelerating}. Therefore, to the best of our knowledge, this is the first accelerated second-order method with such a simple implementation, achieving such rate.
	
	In Section~\ref{SectionReparametrization} we show how the momentum-based iterations~\eqref{FNM} are related to the more standard prox-center notation used in the literature.
	
	For our analysis, the core result about the Newton method which we use is its classic \textit{local quadratic convergence}, which we briefly review in Section~\ref{SectionLocal}. 
	
	We provide the accelerated analysis of our algorithm and prove the $O(1/k^3)$ rate in Section~\ref{SectionAcceleration}.
	
	In Section~\ref{SectionBregman}, we generalize our construction further, showing that we can implement our method
	for different geometries through the language of Bregman divergence, as well as for composite optimization problems, including simple constraints or non-smooth regularizers, while the corresponding iteration requires solving a nonlinear subproblem. However, we show the same $O(1 / k^3)$ global rate even with approximate solution to the subproblem.
	In the case of the simplest second-order iterate of the form~\eqref{FNM} our result additionally shows that there is no need to solve the linear system exactly. This allows to implement each iteration with efficient large-scale Hessian-free solvers.

	\section{Algorithm Reparametrization}
	\label{SectionReparametrization}
	
	We analyze convergence of method~\eqref{FNM} with exact Hessian: $\mat{H}_k = \nabla^2 f(\yy_k)$. Therefore, our method would require to solve just one linear system per iteration with the new Hessian computed at the intermediate point. 
	
	We fix the standard Euclidean norm for vectors $\| \xx \|_2 := \la \xx, \xx \ra^{1/2}$, while we 
	show the extension of our analysis to arbitrary norms through Bregman divergence in Section~\ref{SectionBregman}.

	To simplify the reasoning, it is convenient to use the following standard reparameterization.
	We fix an arbitrary sequence of positive numbers $\{ a_k \}_{k \geq 0}$
	and denote their partial sums, starting from $A_0 := 0$, and the normalized ratios by:
	$$
	\ba{rcl}
	A_k & := & \sum\limits_{i = 1}^k a_i, \qquad \gamma_k \;\; := \;\; \frac{a_{k + 1}}{A_{k + 1}} \;\; \in \;\; (0, 1].
	\ea
	$$
	Along with the main sequence of points $\{ \xx_k \}_{k \geq 0}$ we keep track of the auxiliary sequence of prox centers $\{ \vv_k \}_{k \geq 0}$, starting from the same initialization $\vv_0 = \xx_0 \in \R^n$.
	Then, the \textit{prediction points} used to collect the oracle information about the function 
	are taken as convex combinations:
	\beq \label{YkDef}
	\ba{rcl}
	\yy_k & := & \gamma_k \vv_k + (1 - \gamma_k) \xx_k.
	\ea
	\eeq
	Our analysis is inspired by the framework of 
	contracting-point methods~\cite{doikov2020contracting},
	which suggests to compute the next proximal point $\vv_{k + 1}$
	as an inexact minimization of the following subproblem
	with \textit{contracted objective}:
	\beq \label{ContractedSubproblem}
	\ba{rcl}
	\vv_{k + 1} & \approx & \argmin\limits_{\xx \in \R^n}
	\Bigl[  \, h_k(\xx) \;\; := \;\; A_{k + 1} f\bigl( \gamma_k \xx + (1 - \gamma_k) \xx_k \bigr) + \frac{1}{2}\| \xx - \vv_k \|_2^2 \, \Bigr].
	\ea
	\eeq
	Our main observation is that it is enough to perform just a \textit{single classical Newton step}
	to compute the next prox center:
	\beq \label{VkNext}
	\boxed{
	\ba{rcl}
	\vv_{k + 1} & := & \vv_k - \nabla^2 h_k(\vv_k)^{-1} \nabla h_k(\vv_k) \\[10pt]
	& = &
	\vv_k - a_{k + 1} \Bigl(  \frac{a_{k + 1}^2}{A_{k + 1}} \nabla^2 f(\yy_k) + \mat{I} \Bigr)^{-1} \nabla f(\yy_k).
	\ea
	}
	\eeq
	It appears that by controlling $\gamma_k$, we can ensure that the current point $\vv_k$
	is in the \textit{region of local quadratic convergence} of Newton's method, and one step~\eqref{VkNext} is enough to ensure a good approximation to the minimum of the contracted subproblem~\eqref{ContractedSubproblem}. This observation also resembles the classic theory of path-following interior-point methods~\cite{nesterov2018lectures}.
	
	After the prox center is updated, we follow the \textit{rule of similar triangles} to get the next main point:
	\beq \label{XkNext}
	\ba{rcl}
	\xx_{k + 1} & := & \gamma_k \vv_{k + 1} + (1 - \gamma_k) \xx_k.
	\ea
	\eeq
	It is easy to see that $\xx_{k + 1}$ and $\yy_{k}$ are only different in the displacement of the $\vv$-component:
	$$
	\ba{rcl}
	\xx_{k + 1} - \yy_k
	& \overset{(\ref{YkDef}), (\ref{XkNext})}{=} &
	\gamma_k (\vv_{k + 1} - \vv_k)
	\;\; \overset{(\ref{VkNext})}{=} \;\;
	-\Bigl( \nabla^2 f(\yy_k) + \frac{A_{k + 1}}{a_{k + 1}^2} \mat{I} \Bigr)^{-1} \nabla f(\yy_k),
	\ea
	$$
	which gives exactly the first formula in~\eqref{FNM} with the regularization parameter equal to
	\beq \label{AlphaKFormula}
	\boxed{
	\ba{rcl}
	\alpha_k & = & \frac{a_{k + 1}^2}{A_{k + 1}}.
	\ea
	}
	\eeq
	It remains to obtain the momentum formula for the next prediction point $\yy_{k + 1}$. We notice that
	$$
	\ba{rcl}
	\yy_{k + 1}
	& := &
	\gamma_{k + 1} \vv_{k + 1} + (1 - \gamma_{k + 1}) \xx_{k + 1}
	\;\; \overset{(\ref{XkNext})}{=} \;\;
	\gamma_{k + 1} \Bigl[ 
	\xx_k + \frac{1}{\gamma_k}(\xx_{k + 1} - \xx_k)
	\Bigr] + (1 - \gamma_{k + 1}) \xx_{k + 1} \\
	\\
	& = &
	\xx_{k + 1} 
	+ \gamma_{k + 1} \Bigl[ \frac{1}{\gamma_k} - 1 \Bigr] (\xx_{k + 1} - \xx_k),
	\ea
	$$
	which gives the prediction update in~\eqref{FNM} with the momentum parameter equal to
	\beq \label{BetaKFormula}
	\boxed{
	\ba{rcl}
	\beta_k & = & \gamma_{k + 1} \Bigl[ \frac{1}{\gamma_k} - 1 \Bigr]
	\;\; = \;\;
	\frac{a_{k + 2}}{a_{k + 1}} \cdot \frac{A_k}{A_{k + 2}}.
	\ea
	}
	\eeq
	Therefore, we have a direct correspondence between our new parameterization and the primal form of method~\eqref{FNM}.

	\section{Properties of One Newton's Step}
	\label{SectionLocal}

	Our assumption on the objective $f$ is that it is convex and it has the Lipschitz continuous Hessian with some constant $L_2 > 0$:
	\beq \label{LipHessEuclidean}
	\ba{rcl}
	\| \nabla^2 f(\yy) - \nabla^2 f(\xx) \| & \leq & L_2 \|\yy - \xx\|_2, \qquad \xx, \yy \in \R^n,
	\ea
	\eeq
	where in the left hand side of~\eqref{LipHessEuclidean} we have the standard spectral norm of a symmetric matrix.
	Thus,
	\beq \label{LipHess}
	\ba{rcl}
	\| \nabla f(\yy) - \nabla f(\xx) - \nabla^2 f(\xx)(\yy - \xx) \|_2 & \leq & \frac{L_2}{2}\| \yy - \xx \|_2^2,
	\qquad \xx, \yy \in \R^n.
	\ea
	\eeq
	
	Let us investigate what we can say about one Newton's step~\eqref{VkNext} as applied to the contracted objective $h_k(\cdot)$, defined by~\eqref{ContractedSubproblem}. We denote by $\vv_{k}^{\star}$ the exact minimum of $h_k$:
	$$
	\ba{rcl}
	\vv_{k}^{\star} & := & \argmin\limits_{\xx \in \R^n} h_k(\xx),
	\ea
	$$
	which is unique due to strong convexity of the regularizer; we have $\nabla^2 h_k(\vv) \succeq \mat{I}$ for all $\vv \in \R^n$. The optimum satisfies the following stationary condition:
	\beq \label{VkStarStat}
	\ba{rcl}
	\nabla h_k(\vv_{k}^{\star})
	& = &
	a_{k + 1} \nabla f(\xx_{k + 1}^{\star}) + \vv_{k}^{\star} - \vv_k
 	\;\; = \;\; \0, \\
 	\\
	\xx_{k + 1}^{\star} & := & \gamma_k \vv_{k}^{\star} + (1 - \gamma_k) \xx_k.
	\ea
	\eeq
	Therefore, we can relate the distance to the optimum $\vv_{k}^{\star}$ after one Newton's step, as follows:
	\beq \label{NextIterDistance}
	\ba{rcl}
	\| \vv_{k + 1} - \vv_{k}^{\star} \|_2
	&  \leq  & 
	\| \nabla^2 h_k(\vv_k) (\vv_{k + 1} - \vv_{k}^{\star}) \|_2 \\
	\\
	& \overset{(\ref{VkStarStat}), (\ref{VkNext})}{=} &
	\| \nabla h_k(\vv_{k}^{\star}) - \nabla h_k(\vv_k) - \nabla^2 h_k(\vv_k)(\vv_{k}^{\star} - \vv_k)   \|_2 \\
	\\
	& \overset{(\ref{LipHess})}{\leq} &
	\frac{\ell_{k}}{2}\| \vv_k - \vv_{k}^{\star}\|^2_2,
	\ea
	\eeq
	where 
	$$
	\ba{rcl}
	\ell_{k} & := & \frac{a_{k + 1}^3}{A_{k + 1}^2} L_2
	\ea
	$$
	is the Lipschitz constant of the Hessian of the contracted objective.
	
	Note that our simple reasoning~\eqref{NextIterDistance} establishes the classic result
	about local quadratic convergence of pure Newton's method. 
	The sufficient condition for the contraction to ensure that the current point $\vv_k$
	is in the region of local quadratic convergence is
	\beq \label{RegionLocalConvergence}
	\ba{rcl}
	\| \vv_k - \vv_{k}^{\star} \|_2 & \leq & \frac{2}{\ell_k},
	\ea
	\eeq
	which we can control by choosing an appropriately small $\ell_k$. This choice defines the convergence rate for our accelerated method, as we see in the next section.
	
	Together with the distance to the solution, we also need the bound for the new gradient norm:
	\beq \label{GradNormBound}
	\ba{rcl}
	\!\!\!\!\!
	\| \nabla h_k(\vv_{k + 1}) \|_2 & \! \overset{(\ref{VkNext})}{=} \! &
	\| \nabla h_k(\vv_{k + 1}) - \nabla h_k(\vv_k) - \nabla^2 h_k(\vv_k)(\vv_{k + 1} - \vv_k) \|_2
	\overset{(\ref{LipHess})}{\leq} 
	\frac{\ell_k}{2}\| \vv_{k + 1} - \vv_k \|_2^2.
	\ea
	\eeq
	Coupling this bound with an upper bound on the step length: $\| \vv_{k + 1} - \vv_k \|_2 \leq \| \nabla h_k(\vv_k) \|_2$, we have the quadratic convergence also in terms of the gradient norm.

	\section{Accelerated Rate Analysis}
	\label{SectionAcceleration}
	
	We are ready to establish the following main convergence result.
	
	\begin{theorem}
		\label{TheoremMain}
		Assume that a solution~$x^{\star}$ to~\eqref{MainProblem} exists and let $R \geq \|  \xx_0 - \xx^{\star} \|_2$ be any upper estimate.
		Let $a_{k + 1}$ be chosen such that
		\beq \label{aKChoice}
		\ba{rcl}
		\frac{a_{k + 1}^3}{A_{k + 1}^2} & \leq & \frac{1}{L_2 R}.
		\ea
		\eeq
		Then, the following invariant is preserved, for any $k \geq 0$:
		\beq \label{MainInvariant}
		\ba{rcl}
		\frac{1}{2}\| \xx_0 - \xx^{\star} \|^2_2
		+ A_k f(\xx^{\star}) & \geq & 
		\frac{1}{2} \| \vv_k  - \xx^{\star} \|^2_2
		+ A_k f(\xx_k).
		\ea
		\eeq
	\end{theorem}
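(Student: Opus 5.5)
We argue by induction on $k$. The base case $k = 0$ is trivial, since $A_0 = 0$ and $\vv_0 = \xx_0$. For the inductive step we assume~\eqref{MainInvariant} holds at $k$. Since $f(\xx_k) \geq f(\xx^{\star})$, it immediately gives $\| \vv_k - \xx^{\star} \|_2 \leq \| \xx_0 - \xx^{\star} \|_2 \leq R$. Adding $a_{k+1} f(\xx^{\star})$ to both sides of~\eqref{MainInvariant}, and using convexity of $f$ in the form
$$
A_{k+1} f(\gamma_k \xx^{\star} + (1-\gamma_k)\xx_k) \leq a_{k+1} f(\xx^{\star}) + A_k f(\xx_k),
$$
we obtain
$$
\tfrac12 \| \xx_0 - \xx^{\star} \|_2^2 + A_{k+1} f(\xx^{\star}) \;\geq\; \tfrac12 \| \vv_k - \xx^{\star} \|_2^2 + A_k f(\xx_k) + a_{k+1} f(\xx^{\star}) \;\geq\; h_k(\xx^{\star}).
$$
So it suffices to show that $h_k(\xx^{\star}) \geq A_{k+1} f(\xx_{k+1}) + \frac12 \| \vv_{k+1} - \xx^{\star} \|_2^2$.

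For this we use the $1$-strong convexity of $h_k$ at the point $\vv_{k+1}$:
$$
h_k(\xx^{\star}) \geq h_k(\vv_{k+1}) + \la \nabla h_k(\vv_{k+1}), \xx^{\star} - \vv_{k+1} \ra + \tfrac12 \| \xx^{\star} - \vv_{k+1} \|_2^2 .
$$
By~\eqref{XkNext} we have $h_k(\vv_{k+1}) = A_{k+1} f(\xx_{k+1}) + \frac12 \| \vv_{k+1} - \vv_k \|_2^2$. Hence it remains to check
$$
\tfrac12 \| \vv_{k+1} - \vv_k \|_2^2 \geq \| \nabla h_k(\vv_{k+1}) \|_2 \, \| \xx^{\star} - \vv_{k+1} \|_2 .
$$
By~\eqref{GradNormBound}, the left-hand factor satisfies $\| \nabla h_k(\vv_{k+1}) \|_2 \leq \frac{\ell_k}{2} \| \vv_{k+1} - \vv_k \|_2^2$. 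Therefore the required inequality follows once $\| \vv_{k+1} - \xx^{\star} \|_2 \leq 1/\ell_k$. By~\eqref{aKChoice}, $1/\ell_k \geq R$, so it is enough to prove $\| \vv_{k+1} - \xx^{\star} \|_2 \leq R$.

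The main obstacle is exactly this last distance bound. The naive triangle inequality gives only $\frac32 R$, so we need a sharper decomposition. Denote $a := \| \vv_k - \vv_k^{\star} \|_2$ and $b := \| \vv_k^{\star} - \xx^{\star} \|_2$. We apply strong convexity of $h_k$ at its minimizer $\vv_k^{\star}$, together with $h_k(\vv_k^{\star}) \geq A_{k+1} f(\xx^{\star}) + \frac12 a^2$ (using $f \geq f(\xx^{\star})$). Combined with $h_k(\xx^{\star}) \leq A_{k+1} f(\xx^{\star}) + \frac12 \| \vv_k - \xx^{\star} \|_2^2$ from convexity, this yields the Pythagoras-type bound
$$
a^2 + b^2 \leq \| \vv_k - \xx^{\star} \|_2^2 \leq R^2 .
$$
In particular $a \leq R \leq 1/\ell_k \leq 2/\ell_k$, so $\vv_k$ lies in the region~\eqref{RegionLocalConvergence}. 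Then~\eqref{NextIterDistance} gives $\| \vv_{k+1} - \vv_k^{\star} \|_2 \leq \frac{\ell_k}{2} a^2 \leq \frac{a^2}{2R}$, and consequently
$$
\| \vv_{k+1} - \xx^{\star} \|_2 \leq b + \frac{R^2 - b^2}{2R}.
$$
This expression is nondecreasing in $b$ for $b \in [0, R]$, so it is maximized at $b = R$, where it equals $R$. This closes the induction.
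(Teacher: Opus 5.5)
Your argument has one incorrect step, in the derivation of the Pythagoras-type bound. You claim that $h_k(\xx^{\star}) \leq A_{k+1} f(\xx^{\star}) + \frac12\|\vv_k - \xx^{\star}\|_2^2$ follows ``from convexity''. It does not, and in fact the inequality goes the other way. We have $h_k(\xx^{\star}) = A_{k+1} f(\gamma_k \xx^{\star} + (1-\gamma_k)\xx_k) + \frac12\|\xx^{\star} - \vv_k\|_2^2$, and $\xx^{\star}$ minimizes $f$. Convexity only gives $A_{k+1} f(\gamma_k \xx^{\star} + (1-\gamma_k)\xx_k) \leq a_{k+1} f(\xx^{\star}) + A_k f(\xx_k)$, and the surplus $A_k (f(\xx_k) - f^{\star}) \geq 0$ cannot be dropped.

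So your intermediate bound $a^2 + b^2 \leq \|\vv_k - \xx^{\star}\|_2^2$ is not implied by your premises. Take $f$ strictly convex, $k \geq 1$ (so $\gamma_k < 1$), and a state with $\vv_k = \xx^{\star} \neq \xx_k$; this is compatible with the induction hypothesis when $\|\xx_0 - \xx^{\star}\|_2$ is large. The bound would then force $\vv_k^{\star} = \xx^{\star}$, i.e.\ $\nabla f(\gamma_k\xx^{\star} + (1-\gamma_k)\xx_k) = \0$, which is impossible.

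The repair uses something you already proved. Your first display gives $h_k(\xx^{\star}) \leq \frac12\|\xx_0 - \xx^{\star}\|_2^2 + A_{k+1} f(\xx^{\star})$, so the surplus is paid for by the induction hypothesis. Combine this with $h_k(\xx^{\star}) \geq h_k(\vv_k^{\star}) + \frac12 b^2 \geq A_{k+1} f(\xx^{\star}) + \frac12 (a^2 + b^2)$. This yields $a^2 + b^2 \leq \|\xx_0 - \xx^{\star}\|_2^2 \leq R^2$, which is exactly the paper's \eqref{BoundContractedMin}. Your later estimate uses only $a^2 + b^2 \leq R^2$, so everything downstream goes through unchanged.

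With this correction your proof is essentially the paper's:
\begin{itemize}
\item Strong convexity of $h_k$ at $\vv_{k+1}$ packages the paper's two convexity inequalities at $\xx_{k+1}$ together with the identity \eqref{EuclideanSquare}.
\item The reduction to $\ell_k \|\vv_{k+1} - \xx^{\star}\|_2 \leq 1$ via \eqref{GradNormBound} is identical.
\item Your final one-variable maximization is the paper's maximization of $\ell_k \|\xx^{\star} - \vv_k^{\star}\|_2 + \frac{\ell_k^2}{2}\|\vv_k - \vv_k^{\star}\|_2^2$ under \eqref{BoundContractedMin}. You substitute $\ell_k \leq 1/R$ first, which gives the slightly cleaner by-product $\|\vv_{k+1} - \xx^{\star}\|_2 \leq R$.
\end{itemize}
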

	\begin{proof}
		We prove~\eqref{MainInvariant} by induction. For $k = 0$ it holds as we set $A_0 := 0$ and $\xx_0 = \vv_0$.
		
		Assume that~\eqref{MainInvariant} holds for a current iterate $k \geq 0$. Then, for the next step:
		\beq \label{ReasoningBeg}
		\ba{rcl}
		\frac{1}{2}\| \xx_0 - \xx^{\star} \|_2^2 + A_{k + 1} f(\xx^{\star})
		& = & 
		\frac{1}{2}\| \xx_0 - \xx^{\star} \|_2^2 + A_{k} f(\xx^{\star}) + a_{k + 1}f(\xx^{\star}) \\
		\\
		& \overset{(\ref{MainInvariant})}{\geq} & 
		\frac{1}{2} \| \vv_k - \xx^{\star} \|_2^2 + A_{k} f(\xx_k) + a_{k + 1}f(\xx^{\star}).
		\ea
		\eeq
		By convexity of $f$, the right hand side can be lower bounded by the contracted objective $h_k(\cdot)$ at~$\xx^{\star}$, which is strongly convex. Thus,
		$$
		\ba{rcl}
		\frac{1}{2} \| \xx_0 - \xx^{\star} \|_2^2 + A_{k + 1} f(\xx^{\star}) & \geq & h_k(\xx^{\star})
		\;\; \geq \;\; 
		\frac{1}{2} \| \vv_{k}^{\star} - \xx^{\star} \|_2^2 + h_k(\vv_k^{\star}) \\
		\\
		& = & 
		\frac{1}{2} \| \vv_{k}^{\star} - \xx^{\star} \|_2^2 + 
		\frac{1}{2} \| \vv_{k}^{\star} - \vv_k \|_2^2 + A_{k + 1} f(\xx_{k + 1}^{\star}) \\
		\\
		& \geq &
		\frac{1}{2} \| \vv_{k}^{\star} - \xx^{\star} \|_2^2 + 
		\frac{1}{2} \| \vv_{k}^{\star} - \vv_k \|_2^2 + A_{k + 1} f(\xx^{\star}).
		\ea
		$$
		We obtain the following useful bound on the distance to the exact minimum~$\vv_k^{\star}$ of the contracted objective:
		\beq \label{BoundContractedMin}
		\ba{rcl}
		\| \vv_k^{\star} - \xx^{\star} \|_2^2 + 
		\| \vv_k^{\star} - \vv_k \|_2^2 & \leq & \| \xx_0 - \xx^{\star}\|^2 \;\; \leq \;\; R^2.
		\ea
		\eeq
		However, in the actual algorithm we do not have access to the exact minimum $\vv_k^{\star}$. Instead, we substitute for it one Newton's step $\vv_{k + 1}$. Applying the convexity lower bound for both function values in~\eqref{ReasoningBeg},
		$$
		\ba{rcl}
		f(\xx_k) & \geq & f(\xx_{k + 1}) + \la \nabla f(\xx_{k + 1}), \xx_k - \xx_{k + 1} \ra, \\
		\\
		f(\xx^{\star}) & \geq & f(\xx_{k + 1}) + \la \nabla f(\xx_{k + 1}), \xx^{\star} - \xx_{k + 1} \ra,
		\ea
		$$
		we get:
		$$
		\ba{rcl}
		& & \!\!\!\!\!\!\!\!\!\!\!\!
		\frac{1}{2}\| \xx_0 - \xx^{\star} \|_2^2 + A_{k + 1} f(\xx^{\star}) \\
		\\
		& \geq & 
		\frac{1}{2}\| \vv_k - \xx^{\star} \|_2^2 + A_{k + 1} \Bigl[ f(\xx_{k + 1})
		+  \la \nabla f(\xx_{k + 1}),  (\gamma_k \xx^{\star} + (1 - \gamma_k) \xx_k) - \xx_{k + 1} \ra \Bigr] \\
		\\
		& = & 
		\frac{1}{2}\| \vv_k - \xx^{\star} \|_2^2 + A_{k + 1} f(\xx_{k + 1}) + a_{k + 1} \la \nabla f(\xx_{k + 1}),  \xx^{\star} - \vv_{k + 1} \ra.
		\ea
		$$
		Since the norm is Euclidean, we can expand the square as follows:
		\beq \label{EuclideanSquare}
		\ba{rcl}
		\frac{1}{2}\| \vv_k - \xx^{\star} \|_2^2
		& = &
		\frac{1}{2} \| \vv_{k + 1} - \xx^{\star} \|_2^2
		+ \frac{1}{2} \| \vv_k - \vv_{k + 1} \|_2^2
		+ \la \vv_k - \vv_{k + 1}, \vv_{k + 1} - \xx^{\star} \ra.
		\ea
		\eeq
		Substituting this equation we finally arrive at
		$$
		\ba{rcl}
		& & \!\!\!\!\!\!\!\!\!\!\!\!
		\frac{1}{2}\| \xx_0 - \xx^{\star} \|_2^2 + A_{k + 1} f(\xx^{\star}) \\
		\\
		& \geq & 
		\frac{1}{2}\| \vv_{k + 1} - \xx^{\star} \|_2^2 + A_{k + 1} f(\xx_{k + 1}) + \frac{1}{2} \| \vv_{k} - \vv_{k + 1} \|_2^2
		+ \la \vv_{k + 1} - \vv_{k} + a_{k + 1} \nabla f(\xx_{k + 1}), \xx^{\star} - \vv_{k + 1} \ra \\
		\\
		& = & 
		\frac{1}{2}\| \vv_{k + 1} - \xx^{\star} \|_2^2 + A_{k + 1} f(\xx_{k + 1})  + \frac{1}{2} \| \vv_{k} - \vv_{k + 1} \|_2^2
		+ \la \nabla h_k(\vv_{k + 1}), \xx^{\star} - \vv_{k + 1} \ra \\
		\\
		& \overset{(\ref{GradNormBound})}{\geq} & 
		\frac{1}{2}\| \vv_{k + 1} - \xx^{\star} \|_2^2 + A_{k + 1} f(\xx_{k + 1})
		+ \frac{1}{2} \| \vv_{k} - \vv_{k + 1} \|_2^2 \cdot \bigl( 1 - \ell_k \| \xx^{\star} - \vv_{k + 1} \|_2 \bigr),
		\ea
		$$
		and to establish~\eqref{MainInvariant} for the next iterate, it is sufficient to ensure
		$\ell_k \| \xx^{\star} - \vv_{k + 1} \|_2 \leq 1$, which follows from our choice
		\beq \label{EllKChoice}
		\ba{rcl}
		\ell_k R & = & \frac{a_{k + 1}^3}{A_{k + 1}^2} L_2 R \;\; \leq \;\; 1,
		\ea
		\eeq
		and the following bound:
		$$
		\ba{rcl}
		\ell_k \| \xx^{\star} - \vv_{k + 1} \|_2
		& \leq &
		\ell_k \bigl( \, \| \xx^{\star} - \vv_{k}^{\star} \|_2 + \| \vv_{k + 1} - \vv_k^{\star} \| \, \bigr) \\
		\\
		& \overset{(\ref{NextIterDistance})}{\leq} &
		\ell_k \|\xx^{\star} - \vv_k^{\star} \|_2 + \frac{\ell_k^2}{2} \| \vv_k - \vv_k^{\star} \|_2^2 \\
		\\
		& \overset{(\ref{BoundContractedMin})}{\leq} &
		\max\limits_{\substack{(a, b) \in \R^2 \\ a^2 + b^2 \leq R^2}}
		\Bigl[ \ell_k a + \frac{\ell_k^2}{2} b^2 \Bigr]
		\;\; \overset{(\ref{EllKChoice})}{\leq} \;\; 1.
		\ea
		$$
		Therefore, we proved~\eqref{MainInvariant} for all $k \geq 0$.
	\end{proof}
	
	From~\eqref{MainInvariant}, we immediately obtain the global convergence rate:
	$f(\xx_k) - f^{\star} \leq \frac{R^2}{2 A_k}$.
	It remains to choose the controlling coefficients $\{ a_{k} \}_{k \geq 1}$ such that~\eqref{aKChoice} is satisfied and that $A_k$ is as big as possible.
	Among various possibilities, we can use the following predetermined choice that leads to simple formulas for our momentum update~\eqref{FNM}. We set
	$$
	\ba{rcl}
	A_{k + 1} & = & \frac{1}{3^3 L_2 R} (k + 1)(k + 2)(k + 3),
	\ea
	$$
	and, hence,
	$$
	\ba{rcl}
	a_{k + 1} \;\; = \;\; A_{k + 1} - A_{k}
	& = &
	\frac{1}{3^3 L_2R} (k + 1)(k + 2) (k + 3 - k)
	\;\; = \;\;
	\frac{1}{3^2 L_2R} (k + 1)(k + 2).
	\ea
	$$
	This choice ensures that condition~\eqref{aKChoice} holds:
	$$
	\ba{rcl}
	\frac{a_{k + 1}^3}{A_{k + 1}^2}
	& = & 
	\frac{1}{L_2 R} \cdot \frac{(k + 1)^3 (k + 2)^3}{(k + 1)^2 (k + 2)^2 (k + 3)^2}
	\;\; = \;\;
	\frac{1}{L_2 R} \cdot \frac{(k + 1)(k + 2)}{(k + 3)^2}
	\;\; \leq \;\;
	\frac{1}{L_2 R}, \qquad k \geq 0.
	\ea
	$$
	At the same time, we have
	\beq \label{AlphaBetaK}
	\ba{rcl}
	\alpha_k & \overset{(\ref{AlphaKFormula})}{=} &
	\frac{a_{k + 1}^2}{A_{k + 1}}
	\;\; = \;\;
	\frac{1}{3L_2 R} \cdot \frac{(k + 1) (k + 2)}{(k + 3)} \\
	\\
	\beta_k & \overset{(\ref{BetaKFormula})}{=} &
	\frac{a_{k + 2} A_k}{a_{k + 1} A_{k + 2}}
	\;\; = \;\;
	\frac{(k + 2)(k + 3) k (k + 1) (k + 2)}{(k + 1)(k + 2) (k + 2) (k + 3) (k + 4)}
	\;\; = \;\;
	\frac{k}{k + 4}.
	\ea
	\eeq
	We obtain the following concrete instance of the primal accelerated Newton's method:
	\beq \label{PrimalNewton}
	\boxed{
	\left.
	\ba{rcl} 
	\xx_{k + 1} & = & \yy_k - \bigl( \nabla^2 f(\yy_k) + 3L_2 R \frac{k + 3}{(k + 1)(k + 2)} \mat{I} \bigr)^{-1} \nabla f(\yy_k) \\
	\\
	\yy_{k + 1} & = & \xx_{k + 1} + \frac{k}{k + 4} (\xx_{k + 1} - \xx_k)
	\ea \right\},  \quad k \geq 0.
	}
	\eeq
	for which we have the following global rate.
	\begin{corollary}
		Let $R \geq \| \xx_0 - \xx^{\star} \|_2$ in method~\eqref{PrimalNewton}. Then, we have:
		$$
		\ba{rcl}
		f(\xx_k) - f^{\star} & \leq & \frac{3^3 L_2 R^3}{2k (k + 1) (k + 2)}
		\;\; = \;\;
		O\Bigl( \frac{L_2 R^3}{k^3} \Bigr), \qquad k \geq 1.
		\ea
		$$
	\end{corollary}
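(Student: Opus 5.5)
The plan is to check that method~\eqref{PrimalNewton} is exactly the instance of the reparametrized scheme \eqref{YkDef}, \eqref{VkNext}, \eqref{XkNext} with $A_{k} = \frac{1}{3^3 L_2 R} k(k+1)(k+2)$. Once that is in place, Theorem~\ref{TheoremMain} gives the rate directly. The correspondence from Section~\ref{SectionReparametrization} shows that the momentum form \eqref{FNM} with $\alpha_k = a_{k+1}^2/A_{k+1}$ and $\beta_k$ given by \eqref{BetaKFormula} produces the same points $\xx_k, \yy_k$ as the prox-center form. So we would first observe that the chosen sequence satisfies $A_0 = 0$ and that $a_{k+1}, A_{k+1}$ are as computed. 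Formulas \eqref{AlphaBetaK} then give exactly $\frac{1}{\alpha_k} = 3L_2R\frac{k+3}{(k+1)(k+2)}$ and $\beta_k = \frac{k}{k+4}$. Hence the iterates of \eqref{PrimalNewton} coincide with those analyzed in Theorem~\ref{TheoremMain}.

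One subtle point is the initialization. At $k=0$ we have $\gamma_0 = a_1/A_1 = 1$, so $\yy_0 = \vv_0 = \xx_0$. This is consistent with starting \eqref{PrimalNewton} from $\yy_0 = \xx_0$. The identity \eqref{BetaKFormula} only involves $\gamma_{k+1}$ and $\gamma_k$, and $\beta_0 = 0$ is consistent with it.

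Next we verify condition \eqref{aKChoice}, which was already computed: $\frac{a_{k+1}^3}{A_{k+1}^2} = \frac{1}{L_2R}\cdot\frac{(k+1)(k+2)}{(k+3)^2} \le \frac{1}{L_2 R}$. The inequality holds since $(k+1)(k+2) \le (k+3)^2$. Theorem~\ref{TheoremMain} then yields invariant \eqref{MainInvariant} for every $k\ge 0$. Dropping the nonnegative term $\frac12\|\vv_k - \xx^\star\|_2^2$ gives $A_k\bigl(f(\xx_k) - f^\star\bigr) \le \frac12 \|\xx_0 - \xx^\star\|_2^2 \le \frac{R^2}{2}$.

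Finally, for $k \ge 1$ we have $A_k > 0$, so we divide by it. Substituting $A_k = \frac{k(k+1)(k+2)}{3^3 L_2 R}$ gives $f(\xx_k) - f^\star \le \frac{3^3 L_2 R^3}{2k(k+1)(k+2)}$, which is $O(L_2R^3/k^3)$.

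There is no real obstacle, since all the analytic work sits in Theorem~\ref{TheoremMain}. The only step needing care is the bookkeeping of indices, namely confirming that $A_k$ with the index shift $k+1 \mapsto k$ equals $\frac{k(k+1)(k+2)}{3^3L_2R}$, together with the equivalence of the two algorithmic forms at $k=0$.
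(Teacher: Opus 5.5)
Your proposal is correct and follows essentially the same route as the paper: you identify method~\eqref{PrimalNewton} as the instance of the reparametrized scheme with $A_k = \frac{k(k+1)(k+2)}{3^3 L_2 R}$, verify condition~\eqref{aKChoice}, and read off $f(\xx_k) - f^{\star} \leq \frac{R^2}{2A_k}$ from the invariant~\eqref{MainInvariant}. Your explicit check of the initialization ($\gamma_0 = 1$, $\yy_0 = \xx_0$, $\beta_0 = 0$) covers a detail the paper leaves implicit, and you handle it correctly.
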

	
	In practice, instead of the rigid choice of parameter $L_2R$ in~\eqref{PrimalNewton}, we might want to choose
	a tuned second-order stepsize to have the fastest rate for a particular problem.
	
	At the same time, from a theoretical perspective, optimization algorithm~\eqref{PrimalNewton}
	is the first known to us second-order method with only one linear system solve per iteration and with the global
	$O(1/k^3)$ rate of convergence for our problem class of convex functions with Lipschitz continuous Hessian.
	
	Instead of using exact solution to the linear system in~\eqref{PrimalNewton} at each iteration,
	it is sufficient to have an approximate solution, which we show in the next section.
	We also further extend method~\eqref{PrimalNewton} for solving \textit{composite} optimization problem (allowing to use simple constraints or a non-smooth regularizer in~\eqref{MainProblem}), as well as allow the method to adapt
	to a different geometry of the problem through Bregman divergence.

	\section{Bregman Geometry and Composite Problems}
	\label{SectionBregman}
	
	We show that our approach can be easily generalized to more abstract settings.
	
	Let $\V$ be a finite-dimensional real vector space with a fixed norm $\| \cdot \|$ on it (not necessarily Euclidean).
	We denote 
	by $\V^{*}$ the dual space, which is the space of linear functions on $\V$.
	We use the symbol $\la \cdot, \cdot \ra$ to denote the value
	of a linear form $\ss \in \V^{*}$ on a vector $\xx \in \V$:
	$$
	\ba{rcl}
	\la \ss, \xx \ra & := & \ss(\xx) \;\; \in \;\; \R.
	\ea
	$$
	The dual norm is defined in the standard way:
	$$
	\ba{rcl}
	\| \ss \|_* & := & \max\limits_{\xx \in \V \; : \; \| \xx \| \leq 1} \la \ss, \xx \ra, \qquad \ss \in \V^{*}.
	\ea
	$$
	
	We consider optimization problems in the following \textit{composite form}:
	\beq \label{MainComposite}
	\ba{rcl}
	\min\limits_{\xx \in Q} \Bigl[ \, F(\xx) & := & f(\xx) + \psi(\xx) \, \Bigr],
	\ea
	\eeq
	where $\psi$ is a \textit{simple} regularizer, $\dom \psi = Q \subseteq \V$,
	which is a proper closed convex function, possibly non-differentiable. At the same time $f$ is several times differentiable in a neighborhood of every $\xx \in Q$. We treat its gradient as the linear form, $\nabla f(\xx) \in \V^{*}$, and the Hessian is the linear map $\nabla^2 f(\xx): \V \to \V^{*}$,
	which we assume to be Lipschitz continuous on $Q$ with constant $L_2 > 0$:
	\beq \label{LipHessGeneral}
	\ba{rcl}
	\| \nabla^2 f(\yy) - \nabla^2 f(\xx) \|
	& := & 
	\max\limits_{\hh \in \V \; : \; \| \hh \| \leq 1} \| (\nabla^2 f(\yy) - \nabla^2 f(\xx)) \hh \|_* \\
	\\
	& \leq &
	L_2 \|\yy - \xx \|, \qquad \xx, \yy \in Q.
	\ea
	\eeq
	The standard integration argument provides us the bound for the gradient approximation:
	\beq \label{GradApprox}
	\ba{rcl}
	\| \nabla f(\yy) - \nabla f(\xx) - \nabla^2 f(\xx)(\yy - \xx) \|_* & \leq & \frac{L_2}{2}\| \yy - \xx \|^2, 
	\qquad \xx, \yy \in Q.
	\ea
	\eeq
	
	We also fix a $1$-strongly convex differentiable \textit{distance function} $d(\cdot)$.
	The Bregman divergence is:
	\beq \label{Bregman}
	\ba{rcl}
	\beta_d(\xx; \yy) & := & d(\yy) - d(\xx) - \la \nabla d(\xx), \yy - \xx \ra
	\;\; \geq \;\;
	\frac{1}{2}\| \yy - \xx \|^2.
	\ea
	\eeq
	
	The main algebraic identity which is the key structural property of the Bregman divergence (compare it with~\eqref{EuclideanSquare}) is the next one, for any $\vv$, $\vv^+$, and $\xx$ it holds
	\beq \label{BregmanMain}
	\ba{rcl}
	\beta_d(\vv; \xx)
	& = & 
	\beta_d(\vv^+; \xx) + \beta_d(\vv; \vv^+) + \la \nabla d(\vv) - \nabla d(\vv^+), \vv^+ - \xx \ra,
	\ea
	\eeq	
	which follows directly from the definition.
	
	Consider the following general second-order scheme, starting from $\xx_0 = \vv_0 \in Q$,
	iterate $k \geq 0$:
	\begin{tcolorbox}[colback=white, colframe=black, sharp corners, boxrule=0.5pt]
	\begin{enumerate}
		\item Set $\yy_k := \gamma_k \vv_k + (1 - \gamma_k) \xx_k$ and use $\mat{H}_k = \nabla^2 f(\yy_k)$.
		\item Compute $\vv_{k + 1}$ such that
		$$
		\ba{rcl}
		\frac{1}{a_{k + 1}} \bigl( \nabla d(\vv_{k + 1}) - \nabla d(\vv_k) \bigr)
		+
		\nabla f(\yy_k)
		+
		\gamma_k \mat{H}_k \bigl( \vv_{k + 1} - \vv_k \bigr)
		+ \psi'(\vv_{k + 1}) & \approx & \0 \;\; \in \;\; \V^{*},
		\ea
		$$
		where $\psi'(\vv_{k + 1}) \in \partial \psi(\vv_{k + 1})$ is a subgradient of the regularizer.
		\item Set $\xx_{k + 1} := \gamma_k \vv_{k + 1} + (1 - \gamma_k) \xx_k$.
	\end{enumerate}
\end{tcolorbox}

	Note that computation of $\vv_{k + 1}$ corresponds to the approximate minimization of the following subproblem:
	\beq \label{VkMinBregman}
	\ba{rcl}
	\vv_{k + 1} & \approx & \argmin\limits_{\xx \in Q}
	\Bigl\{ 
	\;
	a_{k + 1} \la \nabla f(\yy_{k}), \xx - \vv_k \ra
	+ \frac{a_{k + 1}^2}{2 A_{k + 1}} \la \mat{H}_k(\xx - \vv_k), \xx - \vv_k \ra \\[10pt]
	& & \qquad \qquad \quad \;\;
	+ \; a_{k + 1}\psi(\xx) + \beta_d(\vv_k; \xx)
	\;
	\Bigr\}.
	\ea
	\eeq
	This is minimization of a convex quadratic function, regularized by the Bregman divergence, and augmented by the composite component. Hence, we assume that $\psi$ is simple enough that we can perform this minimization efficiently.
	Moreover, we need only an \textit{inexact solution to~\eqref{VkMinBregman}} 
	with the following simple tolerance condition. We require to compute $\vv_{k + 1}$ which satisfies
	the inequality:
	\beq \label{DeltaKDef}
	\ba{rcl}
	\begin{cases}
	& \big\|
	\frac{1}{a_{k + 1}} \bigl( \nabla d(\vv_{k + 1}) - \nabla d(\vv_k)  \bigr)
	+ \nabla f(\yy_k) + \gamma_k \mat{H}_k(\vv_{k + 1} - \vv_k) + \psi'(\vv_{k + 1})
	\bigr\|_*
	\; \leq \;
	\delta_{k + 1}, \\[15pt]
	& 
	\delta_{k + 1} \; := \; \frac{\gamma_k^2 L_2}{2} \min\Bigl[\, \| \vv_{k + 1} - \vv_k \|^2, \, R^2  \, \Bigr],
	\end{cases}
	\ea
	\eeq
	where $\psi'(\vv_{k + 1}) \in \partial \psi(\vv_{k + 1})$ is a certain subgradient, and $R > 0$ is a fixed parameter. Note that we can expect from any reasonable numerical method
	solving subproblem~\eqref{VkMinBregman} to make the left hand side in the inequality in~\eqref{DeltaKDef}
	small, and our condition with $\delta_{k + 1}$ just specifies the required tolerance precisely.
	Note that it is very important that we admit inexact solution to~\eqref{VkMinBregman}, as in general,
	this subproblem can be quite complicated due to an arbitrary choice of $d$ and even for simple regularizers $\psi$.
	At the same time, the subproblem remains convex, so standard convex optimization techniques can be employed.
	
	Clearly, without composite part $\psi(\xx) \equiv 0$ and using the Euclidean norm $d(\xx) = \frac{1}{2}\| \xx \|_2^2$, 
	this algorithm gives our previous method with a possibility of using \textit{inexact linear solver}. The condition~\eqref{DeltaKDef} becomes:
	\beq \label{DeltaEuclidean}
	\ba{rcl}
	\big\| 
	\nabla f(\yy_k) + \bigl( \gamma_k \mat{H}_k + \frac{1}{a_{k + 1}} \mat{I} \bigr)(\vv_{k + 1} - \vv_k)
 	\big\|_2 & \leq & 
 	\frac{\gamma_k^2 L_2}{2} \min\Bigl[ \, \| \vv_{k + 1} - \vv_k \|_2^2, \; R^2 \,\Bigr],
	\ea
	\eeq
	and thus we can apply any Hessian-free method solving the linear system, e.g., the conjugate gradient method, making the left hand side in~\eqref{DeltaEuclidean} small. In this case, we do not need to compute the full Hessian. In our solver, we can use only the Hessian-vector products, which is suitable for large-scale models.
	
	For our new most general scheme we prove the following convergence result.
	
	\begin{theorem}
		Let $R \geq \sqrt{2 \beta_d(\xx_0; \xx^{\star})}$
		and let each $\vv_{k + 1}$ satisfy the inexact condition~\eqref{DeltaKDef}.  
		Let $a_{k + 1}$ be chosen such that
		\beq \label{NewAkChoice}
		\ba{rcl}
		\frac{a_{k + 1}^3}{A_{k + 1}^2} & \leq & 
		\frac{2}{5 L_2 R}.
		\ea
		\eeq
		Then, for any $k \geq 0$ we have
		\beq \label{NewInvariant}
		\ba{rcl}
		\beta_d(\xx_0; \xx^{\star}) + A_k F(\xx^{\star}) & \geq & \beta_d(\vv_k; \xx^{\star}) + A_k F(\xx_k).
		\ea
		\eeq
	\end{theorem}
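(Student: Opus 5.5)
The plan is to repeat the induction from the proof of Theorem~\ref{TheoremMain}, with three changes: the Euclidean square is replaced by the Bregman identity~\eqref{BregmanMain}, the composite term is handled by convexity of $\psi$, and the exact Newton step is replaced by the inexactness condition~\eqref{DeltaKDef}. The base case $k=0$ is trivial, since $A_0=0$ and $\vv_0=\xx_0$. For the inductive step, I add $a_{k+1}F(\xx^\star)$ to both sides of~\eqref{NewInvariant} and bound $F$ from below in three ways:
\begin{itemize}
\item the gradient inequality for $f$ at $\xx_{k+1}$, applied at both $\xx_k$ and $\xx^\star$, exactly as before;
\item for the composite part, the subgradient inequality $\psi(\xx^\star)\ge \psi(\vv_{k+1})+\la\psi'(\vv_{k+1}),\xx^\star-\vv_{k+1}\ra$;
\item convexity along the similar-triangle step~\eqref{XkNext}, $A_{k+1}\psi(\xx_{k+1})\le A_k\psi(\xx_k)+a_{k+1}\psi(\vv_{k+1})$.
\end{itemize}
Combining these with~\eqref{BregmanMain} for $\vv=\vv_k$, $\vv^+=\vv_{k+1}$, $\xx=\xx^\star$ gives
$$
\beta_d(\xx_0;\xx^\star)+A_{k+1}F(\xx^\star)\;\ge\;\beta_d(\vv_{k+1};\xx^\star)+A_{k+1}F(\xx_{k+1})+\beta_d(\vv_k;\vv_{k+1})-a_{k+1}\|\bm{g}_{k+1}\|_*\,\|\xx^\star-\vv_{k+1}\|,
$$
where $\bm{g}_{k+1}:=\frac{1}{a_{k+1}}(\nabla d(\vv_{k+1})-\nabla d(\vv_k))+\nabla f(\xx_{k+1})+\psi'(\vv_{k+1})$.

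Next I bound $\|\bm{g}_{k+1}\|_*$. It differs from the residual in~\eqref{DeltaKDef} by
$$
\nabla f(\xx_{k+1})-\nabla f(\yy_k)-\mat{H}_k(\xx_{k+1}-\yy_k),
$$
because $\xx_{k+1}-\yy_k=\gamma_k(\vv_{k+1}-\vv_k)$. By~\eqref{GradApprox} this difference has dual norm at most $\frac{\gamma_k^2L_2}{2}r^2$, where $r:=\|\vv_{k+1}-\vv_k\|$. Writing $\ell_k:=\frac{a_{k+1}^3}{A_{k+1}^2}L_2=a_{k+1}\gamma_k^2L_2$, we get
$$
a_{k+1}\|\bm{g}_{k+1}\|_*\le \tfrac{\ell_k}{2}\bigl(r^2+\min[r^2,R^2]\bigr)\le \ell_k r^2 .
$$
Using $\beta_d(\vv_k;\vv_{k+1})\ge\frac12 r^2$, the invariant for $k+1$ then follows once $\ell_k D\le\frac12$, where $D:=\|\xx^\star-\vv_{k+1}\|$. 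Given~\eqref{NewAkChoice}, $\ell_k\le\frac{2}{5R}$, so it is enough to show $D\le\frac54R$.

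The main obstacle is this a priori bound on $D$. We no longer have the local-quadratic-convergence argument~\eqref{NextIterDistance}, because $\vv_{k+1}$ is only an inexact solution of a different model. I would instead reuse the displayed inequality itself. Drop $A_{k+1}F(\xx_{k+1})\ge A_{k+1}F(\xx^\star)$ and use $\beta_d(\vv_{k+1};\xx^\star)\ge\frac12D^2$ and $2\beta_d(\xx_0;\xx^\star)\le R^2$. Keeping the two pieces of $\|\bm{g}_{k+1}\|_*$ separate, this gives
$$
D^2\le R^2-r^2+\ell_k\bigl(r^2+\min[r^2,R^2]\bigr)D .
$$
I would then argue by contradiction. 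If $D>\frac54R$, then $\ell_kD$ could exceed $\frac12$. Here the cap $\min[r^2,R^2]$ in $\delta_{k+1}$ is exactly what keeps the right-hand side from growing with $r$, since the $-r^2$ term absorbs the $\ell_k r^2 D$ term. A short case analysis on $r\le R$ versus $r>R$ should show that this quadratic inequality in $D$ forces $D\le\frac54R$. The constant $\frac25$ in~\eqref{NewAkChoice} is presumably calibrated for this; if the clean contradiction fails, I would instead track $\ell_kD$ directly as a single variable and maximize over $(r,D)$ satisfying the inequality, as in the final step of the proof of Theorem~\ref{TheoremMain}.
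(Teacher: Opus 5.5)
\textbf{There is a genuine gap: the a priori bound $D\le\frac54R$ does not follow from your self-bounding inequality.} Your first half is fine and matches the paper. The descent chain, the estimate $a_{k+1}\|\bm{g}_{k+1}\|_*\le\ell_k r^2$, and the reduction to $\ell_kD\le\frac12$ (hence to $D\le\frac54R$) are what the paper does. The problem is the next step. For $r>R$ your inequality reads $D^2\le R^2(1+\ell_kD)+r^2(\ell_kD-1)$. Once $\ell_kD>1$, the coefficient of $r^2$ is positive, so every sufficiently large pair $(r,D)$ satisfies it. Concretely, take $R=1$, $\ell_k=\frac25$, $r=D=10$: the inequality becomes $100\le 305$. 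The choice $r=D$ is also compatible with the triangle-inequality information $|r-D|\le\|\vv_k-\xx^\star\|\le R$ from the induction hypothesis. The cap $\min[r^2,R^2]$ only controls the inexactness residual $\delta_{k+1}$. The Taylor remainder $\frac{\gamma_k^2L_2}{2}r^2$ coming from \eqref{GradApprox} is uncapped, so $-r^2$ cannot absorb $\ell_kr^2D$ when $\ell_kD>1$. Your inequality therefore only yields the dichotomy ``$D\le R$ or $\ell_kD>1$'', and nothing in it excludes the second branch. Your fallback of maximizing over feasible $(r,D)$ fails for the same reason, since that set is unbounded.

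\textbf{The missing idea is the one you dismissed.} Local quadratic convergence does survive both inexactness and the Bregman/composite setting. The trick is to compare $\vv_{k+1}$ not with the exact solution of the quadratic model, but with the exact minimizer $\vv_k^\star$ of the contracted objective $h_k(\xx)=A_{k+1}f(\gamma_k\xx+(1-\gamma_k)\xx_k)+a_{k+1}\psi(\xx)+A_k\psi(\xx_k)+\beta_d(\vv_k;\xx)$. This gives two facts.
\begin{enumerate}
\item The induction hypothesis, together with $h_k(\xx^\star)\ge h_k(\vv_k^\star)+\beta_d(\vv_k^\star;\xx^\star)$, convexity of $\psi$, and $F(\xx_{k+1}^\star)\ge F(\xx^\star)$, gives $\|\vv_k^\star-\xx^\star\|^2+\|\vv_k^\star-\vv_k\|^2\le R^2$.
\item Test the strong monotonicity of $\nabla d$ (plus $\mat{H}_k\succeq\0$) on $\vv_{k+1}-\vv_k^\star$. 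Then substitute the optimality condition of $\vv_k^\star$, and use \eqref{DeltaKDef}, \eqref{GradApprox} for the pair $\yy_k,\xx_{k+1}^\star$ (note $\xx_{k+1}^\star-\yy_k=\gamma_k(\vv_k^\star-\vv_k)$), and monotonicity of $\partial\psi$. This yields $\|\vv_{k+1}-\vv_k^\star\|\le a_{k+1}\delta_{k+1}+\frac{\ell_k}{2}\|\vv_k-\vv_k^\star\|^2\le\frac{\ell_k}{2}\bigl(\|\vv_k-\vv_k^\star\|^2+R^2\bigr)$. This is where the cap $R^2$ in $\delta_{k+1}$ is actually needed.
\end{enumerate}
Finally, apply the triangle inequality $D\le\|\vv_k^\star-\xx^\star\|+\|\vv_{k+1}-\vv_k^\star\|$ and maximize over $a^2+b^2\le R^2$ as in Theorem~\ref{TheoremMain}. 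With $\ell_kR\le\frac25$ this gives $\ell_kD\le\ell_kR+\frac{\ell_k^2R^2}{2}\le\frac{12}{25}<\frac12$, which closes the induction.
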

	\begin{proof}
		We prove~\eqref{NewInvariant} by induction in $k \geq 0$. It clearly holds for $k = 0$. Consider any $k \geq 0$:
		\beq \label{GenProof1}
		\ba{rcl}
		\beta_d(\xx_0; \xx^{\star}) + A_{k + 1} F(\xx^{\star})
		& = & 
		\beta_d(\xx_0; \xx^{\star}) + A_k F(\xx^{\star}) + a_{k + 1} F(\xx^{\star}) \\
		\\
		& \overset{(\ref{NewInvariant})}{\geq} &
		\beta_d(\vv_k; \xx^{\star}) + A_k F(\xx_k) + a_{k + 1} F(\xx^{\star}).
		\ea
		\eeq
		The right hand side is lower bounded by the following objective with contracted smooth part:
		$$
		\ba{rcl}
		h_k(\xx) & = &
		A_{k + 1} f\bigl( \gamma_k \xx + (1 - \gamma_k) \xx_k \bigr)
		+ a_{k + 1} \psi(\xx) + A_k \psi(\xx_k) + \beta_d(\vv_k; \xx).
		\ea
		$$
		Denote the minimum of $h_k(\cdot)$ by $\vv_k^{\star}$, which satisfies the optimality condition
		(compare with~\eqref{VkStarStat}):
		\beq \label{BregmanOptimality}
		\ba{rcl}
		-a_{k + 1}\psi'(\vv_k^{\star})
		& := & 
		a_{k + 1} \nabla f(\xx_{k + 1}^{\star})
		+ \nabla d(\vv_k^{\star}) - \nabla d(\vv_k) 
		\;\; \in \;\; -a_{k + 1} \partial \psi(\vv_k^{\star}), \\
		\\
		\xx_{k + 1}^{\star} & := & \gamma_k \vv_k^{\star} + (1 - \gamma_k) \xx_k.
		\ea
		\eeq
		Using the key property of the Bregman divergence~\eqref{BregmanMain} and convexity of all components, we thus have
		$$
		\ba{rcl}
		\beta_d(\xx_0; \xx^{\star}) + A_{k + 1} F(\xx^{\star})
		& \geq & 
		h_k(\xx^{\star})
		\;\; \geq \;\;
		\beta_d(\vv_k^{\star}; \xx^{\star})
		+
		\hh_k(\vv_k^{\star}) \\
		\\
		& \geq & A_{k + 1} F(\xx_{k + 1}^{\star}) + \beta_d(\vv_k; \vv_k^{\star}) + \beta_d(\vv_k^{\star}; \xx^{\star}),
		\ea
		$$
		where we use the same notation as before, $\xx_{k + 1}^{\star} := \gamma_k \vv_k^{\star} + (1 - \gamma_k) \xx_k$. Hence, we obtain the following generalization of~\eqref{BoundContractedMin}:
		\beq \label{BoundGenMin}
		\ba{rcl}
		\beta_d(\vv_k^{\star}; \xx^{\star}) + \beta_d(\vv_k; \vv_k^{\star})
		& \leq & \beta_d(\xx_0; \xx^{\star}).
		\ea
		\eeq
		It is remarkable that the reasoning works for \textit{any choice} of the convex distance function $d$.
		Taking into account that $d$ is strongly convex~\eqref{Bregman} with respect to a fixed primal norm, and using the bound for the initial distance, we obtain:
		\beq \label{BoundGenMin2}
		\ba{rcl}
		\| \vv_k^{\star} - \xx^{\star} \|^2 + \| \vv_k^{\star} - \vv_k \|^2 & \leq & R^2.
		\ea
		\eeq
		
		We can continue the reasoning as follows, using the convexity of $f$ and $\psi$:
		$$
		\ba{rcl}
		& & \!\!\!\!\!\!\!\!\!\!\!\!\!\!\!\!\!\!\!\!
		\beta_d(\xx_0; \xx^{\star}) + A_{k + 1} F(\xx^{\star})  \\
		\\
		&\overset{(\ref{GenProof1})}{\geq} &
		\beta_d(\vv_k; \xx^{\star}) + a_{k + 1} F(\xx^{\star}) + A_{k} F(\xx_k)
		\\
		\\
		& \geq & 
		\beta_d(\vv_k; \xx^{\star})
		+ a_{k + 1} \psi(\xx^{*}) + A_{k} \psi(\xx_k)
		+ A_{k + 1} f(\xx_{k + 1}) + a_{k + 1} \la \nabla f(\xx_{k + 1}), \xx^{\star} - \vv_{k + 1} \ra \\
		\\
		& \geq &
		\beta_d(\vv_k; \xx^{\star})
		+ a_{k + 1} \psi(\vv_{k + 1}) + A_{k} \psi(\xx_k)
		+ A_{k + 1} f(\xx_{k + 1}) \\
		\\
		& & \qquad + \; a_{k + 1} \la \nabla f(\xx_{k + 1}) + \psi'(\vv_{k + 1}), \xx^{\star} - \vv_{k + 1} \ra  \\
		\\
		& \geq & 
		\beta_d(\vv_k; \xx^{\star}) + A_{k + 1} F(\xx_{k + 1})
		+ a_{k + 1} \la \nabla f(\xx_{k + 1}) + \psi'(\vv_{k + 1}), \xx^{\star} - \vv_{k + 1} \ra.
		\ea
		$$
		Now, we use the key identity~\eqref{BregmanMain}, which explains the structure of the Bregman divergence:
		$$
		\ba{rcl}
		\beta_d(\vv_k; \xx^{\star}) & = & \beta_{d}(\vv_{k + 1}; \xx^{\star})
		+ \beta_d(\vv_{k}; \vv_{k + 1})
		+ \la \nabla d(\vv_k) - \nabla d(\vv_{k + 1}), \vv_{k + 1} - \xx^{\star} \ra.
		\ea
		$$
		Hence,
		$$
		\ba{rcl}
		& & \!\!\!\!\!\!\!\!\!\!\!\!\!\!\!\!\!\!\!\!
		\beta_d(\xx_0; \xx^{\star}) + A_{k + 1} F(\xx^{\star}) \\
		\\
		& \geq & \beta_d(\vv_{k + 1}; \xx^{\star})
		+ A_{k + 1} F(\xx_{k + 1})
		+ \beta_d(\vv_k; \vv_{k + 1}) \\
		\\
		& & \qquad + \; a_{k + 1}\la \nabla f(\xx_{k + 1}) + \psi'(\vv_{k + 1})
		+ \frac{1}{a_{k + 1}}( \nabla d(\vv_{k + 1}) - \nabla d(\vv_{k}) ), \xx^{\star} - \vv_{k + 1}  \ra \\
		\\
		& \overset{(\ref{DeltaKDef})}{\geq}  &
		\beta_d(\vv_{k + 1}; \xx^{\star})
		+ A_{k + 1} F(\xx_{k + 1})
		+ \beta_d(\vv_k; \vv_{k + 1}) 
		- \frac{\ell_k}{2}\| \vv_{k + 1} - \vv_k \|^2 \cdot  \| \vv_{k + 1} - \xx^{\star} \|
		\\
		\\
		& & \qquad + \; a_{k + 1}\la \nabla f(\xx_{k + 1}) 
		- \nabla f(\yy_k) - \mat{H}_k(\xx_{k + 1} - \yy_k), \xx^{\star} - \vv_{k + 1}  \ra \\
		\\
		& \overset{(\ref{GradApprox})}{\geq} &
		\beta_d(\vv_{k + 1}; \xx^{\star})
		+ A_{k + 1} F(\xx_{k + 1})
		+ \beta_d(\vv_k; \vv_{k + 1}) 
		- 
		\ell_k \| \vv_{k + 1} - \vv_k \|^2
		\cdot \| \vv_{k + 1} - \xx^{\star} \| \\
		\\
		& \overset{(\ref{Bregman})}{\geq}&
		\beta_d(\vv_{k + 1}; \xx^{\star})
		+ A_{k + 1} F(\xx_{k + 1})
		+ \frac{1}{2} \| \vv_k - \vv_{k + 1} \|^2 \Bigl( 
		1 - 2 \ell_k \| \vv_{k + 1} - \xx^{\star} \|
		\Bigr)
		\ea
		$$
		where $\boxed{\ell_k = \tfrac{a_{k + 1}^3 L_2}{A_{k + 1}^2}}$.
		Therefore, to complete the proof we need to check the following inequality:
		\beq \label{InequalityToCheck}
		\ba{rcl}
		\ell_k \| \vv_{k + 1} - \xx^{\star} \| & \leq & \frac{1}{2}.
		\ea
		\eeq
		Let us observe the main properties of the inexact composite Newton step 
		equipped with the condition~\eqref{DeltaKDef}.
		Due to strong convexity of the distance function, and using convexity of the objective, so that $\mat{H}_k \succeq \0$, we have
		$$
		\ba{rcl}
		& & \!\!\!\!\!\!\!\!\!\!\!\!\!\!\!\!\!\!\!\!\!\!\!\!\!\!\!\!\!\!
		 \| \vv_{k + 1} - \vv_k^{\star} \|^2 \\
		\\
		& \leq & \la \nabla d(\vv_{k + 1}) - \nabla d(\vv_k^{\star}), \vv_{k + 1} - \vv_k^{\star} \ra \\
		\\
		& \leq & \la \nabla d(\vv_{k + 1}) - \nabla d(\vv_k^{\star}) + \gamma_k a_{k + 1} \mat{H}_k(\vv_{k + 1} - \vv_k^{\star}), \vv_{k + 1} - \vv_k^{\star} \ra \\
		\\
		& \overset{(\ref{BregmanOptimality})}{=} &
		\la \nabla d(\vv_{k + 1}) 
		- \nabla d(\vv_k) + a_{k + 1} \nabla f(\yy_k) + \gamma_k a_{k + 1} \mat{H}_k (\vv_{k + 1} - \vv_k) \\
		\\
		& & \qquad
		+ \;a_{k + 1} \psi'(\vv_{k + 1}),
		\vv_{k + 1} - \vv_k^{\star} \ra \\
		\\
		& & + \;
		a_{k + 1} \la \nabla f(\xx_{k + 1}^{\star}) - \nabla f(\yy_k) - \gamma_k \mat{H}_k (\vv_k^{\star} - \vv_k),
		\vv_{k + 1} - \vv_k^{\star} \ra \\
		\\
		& & + \; \underbrace{a_{k + 1} \la \psi'(\vv_k^{\star}) - \psi'(\vv_{k + 1}), \vv_{k + 1} - \vv_k^{\star} \ra}_{\leq 0} \\
		\\
		& \overset{(\ref{DeltaKDef}), (\ref{GradApprox})}{\leq} &
		\bigl( a_{k + 1}\delta_{k + 1} +  \frac{\ell_k}{2} \| \vv_{k} - \vv_k^{\star} \|^2 \bigr) \| \vv_{k + 1} - \vv_k^{\star} \|.
		\ea
		$$
		Therefore, we justified that
		\beq \label{VkNewBregmanBound}
		\ba{rcl}
		\| \vv_{k + 1} - \vv_k^{\star} \| & \leq & 
		a_{k + 1}\delta_{k + 1} + \frac{\ell_k}{2} \| \vv_k - \vv_k^{\star}\|^2
		\;\; \overset{(\ref{DeltaKDef})}{\leq} \;\;
		\frac{\ell_k}{2} \| \vv_k - \vv_k^{\star} \|^2 + \frac{\ell_k}{2} R^2.
		\ea
		\eeq
		It remains to notice that
		$$
		\ba{rcl}
		\ell_k \| \vv_{k + 1} - \xx^{\star} \|
		& \leq & 
		\ell_k \| \vv_k^{\star} - \xx^{\star} \|
		+ \ell_k \| \vv_{k + 1} - \vv_k^{\star} \| \\
		\\
		& \overset{(\ref{VkNewBregmanBound})}{\leq} &
		\ell_k \| \vv_k^{\star} - \xx^{\star} \|
		+ \frac{\ell_k^2}{2} \| \vv_k - \vv_k^{\star}\|^2 + \frac{\ell_k^2}{2} R^2 \\
		\\
		& \overset{(\ref{BoundGenMin2})}{\leq} &
		\max\limits_{\substack{(a, b) \in \R^2 \\ a^2 + b^2 \leq R^2 }}
		\Bigl[  \ell_k a + \frac{\ell_k^2}{2} b^2 \Bigr]
		+ \frac{\ell_k^2}{2} R^2
		\;\; \leq \;\;
		\ell_k R + \frac{\ell_k^2}{2} R^2 \;\; \leq \;\; \frac{1}{2},
		\ea
		$$
		where we used our selection of the parameters~\eqref{NewAkChoice}.
	\end{proof}
	
	\begin{corollary}
		We see that up to a numerical constant in the right hand side, condition~\eqref{NewAkChoice} 
		coincides with that one from Theorem~\ref{TheoremMain}.
		For example, we can set
		$$
		\ba{rcl}
		A_{k + 1} & = & \frac{2}{3^3 5 L_2 R}(k + 1)(k + 2)(k + 3),
		\qquad
		a_{k + 1} \;\; = \;\; \frac{2}{3^2 5 L_2 R}(k + 1) (k + 2)
		\ea
		$$
		and $\gamma_k = \frac{a_{k + 1}}{A_{k + 1}} = \frac{3}{k + 3}$
		in our general algorithm, which gives the following accelerated rate, for $k \geq 1$:
		$$
		\ba{rcl}
		F(\xx_k) - F^{\star} & \overset{(\ref{NewInvariant})}{\leq} & \frac{\beta_d(\xx_0; \xx^{\star})}{A_k}
		\;\; \leq \;\;
		\frac{3^3 5 L_2 R^3}{k(k + 1)(k + 2)}
		\;\; = \;\;
		O\Bigl( \frac{L_2 R^3}{k^3}  \Bigr).
		\ea
		$$
	\end{corollary}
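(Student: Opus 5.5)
The statement is the final Corollary. It splits into two independent checks: that the proposed sequence satisfies the hypothesis~\eqref{NewAkChoice} of the preceding theorem, and that the invariant~\eqref{NewInvariant} then gives the stated rate.

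\textbf{Step 1: the schedule is admissible.} Set $c := \frac{2}{5 L_2 R}$ and $A_{k+1} := \frac{c}{27}(k+1)(k+2)(k+3)$.
\begin{itemize}
\item Since $(k+1)(k+2)(k+3)$ vanishes at $k=-1$, we get $A_0 = 0$, matching the convention $A_0 := 0$.
\item Telescoping gives
\[
a_{k+1} = A_{k+1} - A_k = \frac{c}{27}(k+1)(k+2)\bigl[(k+3) - k\bigr] = \frac{c}{9}(k+1)(k+2),
\]
and hence $\gamma_k = a_{k+1}/A_{k+1} = \frac{3}{k+3}$.
\item Exactly as in the computation after Theorem~\ref{TheoremMain},
\[
\frac{a_{k+1}^3}{A_{k+1}^2} = \frac{27\, c^3 (k+1)^3(k+2)^3}{729\, c^2 (k+1)^2(k+2)^2(k+3)^2} \cdot 27 = c \cdot \frac{(k+1)(k+2)}{(k+3)^2} \le c,
\]
since $(k+1)(k+2) \le (k+3)^2$. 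This is precisely~\eqref{NewAkChoice}.
\end{itemize}

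\textbf{Step 2: the rate.} Assume each $\vv_{k+1}$ satisfies~\eqref{DeltaKDef} and $R \ge \sqrt{2\beta_d(\xx_0;\xx^\star)}$. The preceding theorem then gives~\eqref{NewInvariant} for every $k$. Since $\beta_d(\vv_k;\xx^\star) \ge 0$ by~\eqref{Bregman}, dropping it and rearranging yields
\[
F(\xx_k) - F^\star \le \frac{\beta_d(\xx_0;\xx^\star)}{A_k},
\]
which is valid for $k \ge 1$ because $A_k > 0$ there. Now substitute:
\begin{itemize}
\item $\beta_d(\xx_0;\xx^\star) \le R^2/2$;
\item $A_k = \frac{2}{135 L_2 R}\, k(k+1)(k+2)$.
\end{itemize}
This gives
\[
F(\xx_k) - F^\star \le \frac{135 L_2 R^3}{4\, k(k+1)(k+2)} \le \frac{3^3\cdot 5\, L_2 R^3}{k(k+1)(k+2)},
\]
which is the claimed bound. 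The slack of a factor $4$ is harmless, and the last form is $O(L_2R^3/k^3)$.

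There is no real obstacle here. The only care needed is index bookkeeping: checking that the cubic formula reproduces $A_0 = 0$, and computing $a_{k+1}$ and $\gamma_k$ from consecutive values of $A$. I would also remark that, besides~\eqref{DeltaKDef}, nothing else about the general scheme needs to be verified, since the theorem already covers any inexact step meeting that tolerance.
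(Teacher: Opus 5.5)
Your proposal is correct and takes the same route as the paper. You verify \eqref{NewAkChoice} by the same telescoping computation used after Theorem~\ref{TheoremMain}, with $\frac{1}{L_2R}$ replaced by $\frac{2}{5L_2R}$; then you drop $\beta_d(\vv_k;\xx^\star)\ge 0$ in \eqref{NewInvariant} and use $\beta_d(\xx_0;\xx^\star)\le R^2/2$ together with $A_k=\frac{2}{135L_2R}k(k+1)(k+2)$. Your sharper constant $\frac{135}{4}$ is right, since the paper's $3^3\cdot 5$ is simply looser. The intermediate constants in your display for $a_{k+1}^3/A_{k+1}^2$ are oddly arranged (it should read $\frac{c^3/729}{c^2/729}$), but the result $c\,\frac{(k+1)(k+2)}{(k+3)^2}$ is correct.
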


	\section{Discussion}
	
	In this work, we show that the direct inclusion of second-order information 
	into the classic fast gradient method in the momentum form~\eqref{FNM} 
	achieves an improved convergence rate of $O(1 / k^3)$
	for convex functions with Lipschitz continuous Hessian.	This is the same rate as for the 
	accelerated cubic Newton from~\cite{nesterov2008accelerating}. However, in our approach we
	use only one (possibly inexact) linear solve per iteration, and do not require performing correcting extragradient steps. To the best of our knowledge, this is the first second-order method
	with such simple iteration complexity for this problem class.
	
	One of the main open questions is how to perform an adaptive estimation of the parameters from~\eqref{PrimalNewton} such as $L_2$ and $R$. While currently we can treat their product $\gamma \equiv L_2 R$ as a single scalar second-order step-size that we fine-tune, it would be interesting
	to develop an adaptive second-order scheme with only one linear solve per iteration.
	
	It would be also interesting to explore optimal second-order schemes with an improved $O(1 / k^{7/2})$ rate and simple complexity per iteration. This seems to require replacing our global parameter $R$ by a local estimate of the proximal step, which immediately leads to complications of the subproblem, which is no longer linear.
	
	Finally, it would be interesting to establish connections to classic quasi-Newton methods, 
	by combining our analysis with recent developments of their non-asymptotic analysis~\cite{rodomanov2021new,rodomanov2022quasi,jin2023non}.
	We keep these directions for future research.
	
	\section*{Acknowledgment}
	
	We are very grateful to Yurii Nesterov and Anton Rodomanov for useful comments on a preliminary version of this manuscript, which improved the presentation.

\end{document}